\numberwithin{equation}{section}
\newtheorem{theorem}{Theorem}[section]
\newtheorem{prop}[theorem]{Proposition}
\newtheorem{lemma}[theorem]{Lemma}
\newtheorem{cor}[theorem]{Corollary}
\newtheorem{alg}[theorem]{Algorithm}
\newtheorem{definition}[theorem]{Definition}
\theoremstyle{remark}
\newtheorem{example}[theorem]{Example}
\newtheorem{remark}[theorem]{Remark}
\newcommand{\C}{\mathcal{C}}
\newcommand{\defeq}{\mathrel{\mathop:}=}
\newcommand{\R}{1.8} 
\newcommand{\n}{8} 
\newcommand{\circpic}[9]{
\begin{tikzpicture}
        \node(center) at (0,0){#1};
        
        \foreach \nr in {1, ..., \n}{
            \path (360/\n*\nr:\R) coordinate (i\nr);
        }
        \foreach \nr in {8,3,5}{
            \draw (center) -- (i\nr) node[midway, above] {};
        }
        
        \draw[white] (center) -- (i4) node[black,midway] {$(1,2)^\prime$};
        \draw[white] (center) -- (i7) node[black, midway, left] {$(1,2)$};
        \draw[red] (i1) \foreach \i in {2,...,\n} {-- (i\i)} -- cycle;
        
        \draw (center) circle (\R);   
        
        \node at (1.9956, .8268) {#2};
        \node at (.8268, 1.9956) {#3};
        \node at (-.8268, 1.9956) {#4};
        \node at (-1.9956, .8268) {#5};
        \node at (-1.9956, -.8268) {#6};
        \node at (-.8268, -1.9956) {#7};
        \node at (.8268, -1.9956) {#8};
        \node at (1.9956, -.8268) {#9};
        \end{tikzpicture}
}
\newcommand{\circpicnodiv}[9]{
	\begin{tikzpicture}
	\node(center) at (0,0){#1};
	
	\foreach \nr in {1, ..., \n}{
		\path (360/\n*\nr:\R) coordinate (i\nr);
	}

	\draw[red] (i1) \foreach \i in {2,...,\n} {-- (i\i)} -- cycle;
	
	\draw (center) circle (\R);   
	
	\node at (1.9956, .8268) {#2};
	\node at (.8268, 1.9956) {#3};
	\node at (-.8268, 1.9956) {#4};
	\node at (-1.9956, .8268) {#5};
	\node at (-1.9956, -.8268) {#6};
	\node at (-.8268, -1.9956) {#7};
	\node at (.8268, -1.9956) {#8};
	\node at (1.9956, -.8268) {#9};
	\end{tikzpicture}
}
\newcommand{\circpicex}[4]{
	\begin{tikzpicture}
	\node(center) at (0,0){#1};
	
	\foreach \nr in {1, 2, 3}{
		\path (360/3*\nr:\R) coordinate (i\nr);
	}

	\draw[red] (i1) \foreach \i in {2,3} {-- (i\i)} -- cycle;
	
	\draw (center) circle (\R);   
	
	\node at (.75, 1.2) {#2};
	\node at (-1.4, 0) {#3};
	\node at (.75, -1.2) {#4};
	\end{tikzpicture}
}   
\title{Strongly maximal intersection-complete neural codes on grids are convex}
\author{Robert Williams}
\address{Robert Williams, Department of Mathematics, Texas A\&M University, College Station, Texas 77843, USA}
\email{rwilliams@math.tamu.edu}
\urladdr{\url{http://www.math.tamu.edu/~rwilliams}}
\begin{document}

\begin{abstract}
The brain encodes spacial structure through a combinatorial code of neural
activity. Experiments suggest such codes  correspond to convex areas of the subject's environment. We present an intrinsic condition that implies a
neural code may correspond to a convex space and give a bound on the minimal dimension underlying such a realization.
\end{abstract}

\maketitle

\section{Introduction}

The brain is continuously interpreting external stimuli to navigate the physical world around it. A major goal in neuroscience is to understand how this is accomplished, and much excitement surrounds advancements in this area. The 1981 Nobel Prize in Physiology or Medicine was awarded in part to David Hubel and Torsten Wiesel for the discovery of neural cells that react to the size, shape, and orientation of visual stimuli \nolinebreak \cite{Hubel59}. The Nobel Prize in this area was awarded for study of neural activity once again in 2014, this time to John O'Keefe, May-Britt Moser, and Edvard Moser for their discovery of cells that act as a positioning system in the brain \nolinebreak \cite{Okeefe71}. They found that these place cells are used by the brain to create a map of the area around an organism. This is used as an ``expected'' environment that will be reactively changed when perception and expectation disagree.

In both of these experiments, convex codes---codes that may be realized by an arrangement of open convex sets in Euclidean space---were observed. These codes may be the key to how the brain represents relationships between stimuli, and they are the focus of our study. However, this definition of convex code relies on extrinsic data. The brain does not have information on the type of stimuli that provokes a neural response. Instead, it must rely on neural activity alone and interpret this activity as environmental stimuli \cite{Curto08}. Interpreting the neural activity of the hippocampal place cells separated from the stimuli that provoke it relies on determining the intrinsic properties that define convex codes. How can we determine if a neural code is convex based on neural activity alone? If a code is convex, what is the minimal dimension required to realize the code as a collection of convex open sets in Euclidean space?

An algebraic approach to this problem was introduced by Curto, Itskov, Veliz-Cuba, and Youngs in \cite{Curto13}. These methods were further expanded when Curto, \textit{et al.} in \cite{Curto15} introduced tools for intrinsically showing a neural code is convex and some conditions that prevent a convex realization. We will present a generalization of one of their results. In Section \ref{sec_bg}, we introduce convex codes and their minimal embedding dimensions. In Section \ref{sec_mainres}, we give a method of constructing a convex realization for codes whose codewords satisfy some incidence properties.


\section{Convex Neural Codes}\label{sec_bg}

A \textit{codeword} on $n$ neurons is a subset $\sigma \subset \lbrack n \rbrack \defeq \{1,2,\dots,n\}$ where the presence of $k$ in $\sigma$ signifies that the $k^\text{th}$ neuron is active. When there is no risk of confusion, we will denote a subset of $[n]$ as a string of its elements--- e.g. $\{1,3,4\}=134$. A \textit{neural code} is a collection of codewords, $\C = \{\sigma_j\}_{j \in \mathcal{J}}$. A collection of sets $\{U_i\}_{i\in \lbrack n \rbrack}$ in $\mathbb{R}^d$ defines a neural code 

\[\C\big(\{U_i\}_{i\in \lbrack n \rbrack} \big) \defeq \big\{ \sigma \subset [n] : 
\emptyset \neq \big( \bigcap_{i \in \sigma} U_i \big) \setminus \big( \bigcup_{j \notin \sigma} U_j \big) \big\}.\]

If $\C$ can be realized as $\C\big(\{U_i\}_{i\in \lbrack n \rbrack}\big)$ for a collection of convex open sets, then $\C$ is a \textit{convex code}. If $d$ is the smallest number for which such a collection exists, then the \textit{minimal embedding dimension} of $\C$, denoted $d(\C)$, is $d$. Not all codes are convex. For example, if $\C_0=\{12,23\}$, then a realization of $\C_0$ requires three convex open sets, $U_1, U_2,$ and $U_3$, such that $U_1 \cap U_2 \neq \emptyset$, $U_2 \cap U_3 \neq \emptyset$, $U_2 \subset U_1 \cup U_3$, and $U_1 \cap U_3 = \emptyset$. If $U_1, U_2, \text{ and } U_3$ are nonempty open sets that satisfy these relations, then $U_2$ is not even connected. On the other hand, $\C_1=\{1234, 123, 12, 2, 23, 234\}$ is a convex code as observed from the following realization:

\begin{figure}[ht]
\begin{center}
\begin{tikzpicture}
	\draw (0,0) ellipse (1 and .5);
	\node at (.3,.7) {$U_4$};
	\draw (0,0) circle (1.5);
	\node at (1.7,.5) {$U_3$};	
 	\draw (0,0) ellipse (3 and 2);
	\node at (3,1) {$U_2$};
  	\draw (-1, 0) ellipse (1.3 and .4);
	\node at (-2,.6) {$U_1$};
\end{tikzpicture}
 \caption{$\C_1 = \C\big( \{U_1,U_2,U_3,U_4\} \big)$}
\end{center}
\end{figure}

Given a neural code $\C$ and a codeword $\sigma \in \C$, we denote $\C\lvert_\sigma\defeq \{ \tau \in \C : \tau \subset \sigma \}$. The codeword $\mu$ is a \textit{maximal codeword} of $\C$ if there does not exist a codeword $\tau \in \C$ such that $\sigma \subsetneq \tau$. A code $\C$ is said to be \textit{max intersection-complete} if given maximal codewords $\mu_1,\mu_2,\dots,\mu_s \in \C$, then $\mu_1 \cap \mu_2 \cap \dots \cap \mu_s \in \C$. The presence of intersections of maximal codewords is connected to the convexity of the code, as seen below.

\begin{prop}[Proposition 4.6 from \cite{Curto15}] \label{prev result}
Let $\C$ be a code. If the intersection of any two distinct maximal codewords is empty, then $\C$ is convex and $d(\C) \leq 2$.
\end{prop}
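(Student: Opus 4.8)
The plan is to reduce to the case of a single maximal codeword, and then realize that case in the plane by an explicit construction.

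\emph{Step 1: reduction.} If a nonempty $\sigma\in\C$ satisfies $\sigma\subseteq\mu_1$ and $\sigma\subseteq\mu_2$ for maximal codewords $\mu_1,\mu_2$, then $\sigma\subseteq\mu_1\cap\mu_2$, which is empty unless $\mu_1=\mu_2$; so each nonempty codeword lies in a \emph{unique} maximal codeword. Writing $\mu_1,\dots,\mu_m$ for the maximal codewords (which we may assume cover $[n]$, placing any silent neuron's set off to the side), disjointness gives a partition $\C\setminus\{\emptyset\}=\bigsqcup_{a}\big(\C\lvert_{\mu_a}\setminus\{\emptyset\}\big)$ into blocks with pairwise disjoint supports, and each $\C\lvert_{\mu_a}$ has $\mu_a$ as its only maximal codeword. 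If each $\C\lvert_{\mu_a}$ has a realization in $\mathbb{R}^2$ by \emph{bounded} convex open sets, we translate these $m$ realizations into $m$ far-apart regions of the plane and take the union: since $U_i\cap U_j=\emptyset$ whenever $i,j$ lie in different blocks, no codeword mixing two blocks is created, the atoms inside block $a$ are exactly $\C\lvert_{\mu_a}$, and a point lying outside everything realizes $\emptyset$. (Since the $U_i$ are bounded, $\emptyset\in\C$ is automatic, so this is consistent.) Thus it suffices to realize in $\mathbb{R}^2$ a code $\mathcal{D}$ on $[k]$ with $[k]\in\mathcal{D}$ as its only maximal codeword.

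\emph{Step 2: the unique-maximal-codeword case.} Fix a small open disk $D_0$ at the origin and a large circle $S$ surrounding it. Every $U_i$ will contain $D_0$, which realizes the codeword $[k]$; for each nonempty $\tau\in\mathcal{D}$ with $\tau\neq[k]$ we place one or more ``witness points'' on $S$ and set $U_i=\operatorname{conv}\big(D_0\cup\{\text{witnesses assigned to codewords }\tau\ni i\}\big)$, thickened by a tiny amount to make it open. A witness assigned to $\tau$ is a vertex of the convex hull of $D_0$ together with all witnesses, so it lies in $U_i$ exactly when $i\in\tau$; with the thickening small and the witnesses spread apart, a small neighborhood of such a witness meets precisely the $U_i$ with $i\in\tau$, so $\tau$ is realized, $D_0$ realizes $[k]$, and the complement realizes $\emptyset$. (Small cases bear this out: three ``ice-cream cones'' on a common disk realize $\{123,1,2,3\}$, and suitably placed cones realize $4$-cycle-type codes.)

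\emph{Step 3: the main obstacle — no spurious codewords.} The real content is arranging the witnesses so that $\operatorname{code}(x)\in\mathcal{D}$ for \emph{every} $x\in\mathbb{R}^2$: no unintended atom may appear in the ``fan'' cells that each $U_i$ contributes between consecutive witnesses of $U_i$ along $S$, nor in regions where several fans overlap. Since these cells are governed by the cyclic order of the witnesses, the target is to choose that order — allowing several witnesses for a single codeword, on different arcs, when one order of one-per-codeword does not suffice — so that for each neuron the codewords containing it occupy a contiguous arc of witnesses, and more generally so that whenever a witness of $\tau$ sits cyclically between witnesses of $\rho$ and $\rho'$ the appropriate containments among $\tau,\rho,\rho'$ hold. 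Producing such a placement from the containment poset of $\mathcal{D}$, and then verifying by a (careful but routine) case analysis of the fan cells that the atoms of the resulting arrangement are exactly $\mathcal{D}\cup\{\emptyset\}$, is what I expect to be the heart of the proof. As everything is built in $\mathbb{R}^2$, this yields $d(\C)\le 2$.
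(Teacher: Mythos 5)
Your Step 1 reduction to codes with a single maximal codeword is correct, and it matches what the standard argument does (disjoint circles placed far apart). But Steps 2--3 leave a genuine gap: the entire content of the proposition is the claim that the arrangement has no spurious atoms, and you explicitly defer exactly that (``is what I expect to be the heart of the proof''). Worse, the plan you sketch for closing it is not merely unverified but sometimes unavailable. With one witness per codeword, the requirement that for each neuron $i$ the witnesses of codewords containing $i$ form a contiguous arc is a circular-ones condition on the codeword--neuron incidence matrix, and it can fail even for single-maximal-codeword codes: for $\mathcal{D}=\{1234\}\cup\{12,13,14,23,24,34\}$ one would need four arcs of length $3$ on a $6$-cycle pairwise meeting in exactly one position, i.e.\ four starting positions in $\mathbb{Z}_6$ pairwise at cyclic distance $2$, which do not exist. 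Your fallback of duplicating witnesses on several arcs is not constructed, and it strains the construction further: once a codeword containing $i$ has copies on two distant arcs, $\operatorname{conv}(D_0\cup W_i)$ acquires the whole region between $D_0$ and the chord joining those copies, so it overlaps the fans of every witness in between and the ``routine'' case analysis of fan cells becomes the whole problem again. As written, the cones are fighting each other, and nothing in the proposal controls their pairwise intersections away from $D_0$ and the witness points.

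The repair is to change the construction so that the regions outside the central convex body are \emph{pairwise disjoint}, which makes the atom computation trivial and removes any ordering condition; this is what the cited proof (and Algorithm \ref{alg} above, in its degenerate one-circle form) does. For a block with unique maximal codeword $\mu_a$, inscribe an $r$-gon in a circle, where $r$ is the number of nonempty codewords properly contained in $\mu_a$ (padding with unlabeled segments if $r<3$). Label the interior polygon $\mu_a$ and label each circular segment with a distinct such codeword $\tau$. Set $U_i$ to be the interior of the union of the polygon with the segments whose label contains $i$. Each $U_i$ is convex because it is the intersection of the open disk with the open half-planes, one for each omitted segment, bounded by that segment's chord and containing the polygon. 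Since the segments are pairwise disjoint, the atom of the segment labeled $\tau$ is exactly $\tau$, the atom of the polygon's interior is $\mu_a$, and the exterior realizes $\emptyset$; there are no other regions, hence no spurious codewords and no case analysis. Substituting this for your Steps 2--3 and keeping your Step 1 yields the proposition with $d(\C)\leq 2$.
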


\begin{theorem}[part of Theorem 4.4 from \cite{Cruz16}] \label{max result}
Let $\C$ be a max intersection-complete code with $s$ maximal codewords. Then $\C$ is a convex code and $d(\C) \leq \max\{2,s-1\}$.
\end{theorem}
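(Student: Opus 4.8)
The plan is to produce, for each such $\C$, an explicit realization by open convex sets in $\mathbb{R}^{\max\{2,s-1\}}$. I would first dispose of the cases $s\le 2$, where the claimed dimension is just $2$, by a direct construction in the plane. For $s\ge 3$ (so that $\max\{2,s-1\}=s-1$) the idea is geometric: fix a simplex $\Sigma\subset\mathbb{R}^{s-1}$ with vertices $v_1,\dots,v_s$ in general position, thinking of $v_k$ as a proxy for the maximal codeword $\mu_k$. For a codeword $\sigma\in\C$ write $M(\sigma)=\{k:\sigma\subseteq\mu_k\}$, which is nonempty since every codeword lies below some maximal one, and let $F_\sigma=\mathrm{conv}\{v_k:k\in M(\sigma)\}$, a face of $\Sigma$; note that $\tau\subseteq\sigma$ forces $M(\tau)\supseteq M(\sigma)$, hence $F_\tau\supseteq F_\sigma$. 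To each $\sigma\in\C$ I would attach a tiny open convex ``atom'' $A_\sigma$ situated near the relative interior of $F_\sigma$, with all the $A_\sigma$ pairwise disjoint, and then set $U_i=\mathrm{int}\,\mathrm{conv}\bigl(\bigcup_{\sigma\in\C,\ i\in\sigma}A_\sigma\bigr)$.

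With this skeleton the nerve comes out right for free. Since $i\in\sigma$ forces $M(\sigma)\subseteq M(\{i\})$, every $A_\sigma$ appearing in the union defining $U_i$ sits near a subface of $F_{\{i\}}=\mathrm{conv}\{v_k:i\in\mu_k\}$, so $U_i$ stays inside a small neighborhood of that face. Using that in a simplex the intersection of faces is the face on the common vertex set, general position of the $v_k$ gives $\bigcap_{i\in\sigma}U_i\neq\emptyset$ iff $\bigcap_{i\in\sigma}\{k:i\in\mu_k\}\neq\emptyset$ iff $\sigma\subseteq\mu_k$ for some $k$; that is, the nerve of $\{U_i\}$ is exactly the simplicial complex generated by the maximal codewords, which is the nerve forced on \emph{any} realization of $\C$. (One direction is immediate anyway, since $U_i\supseteq A_\sigma$ whenever $i\in\sigma\in\C$.)

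The substance is to place the $A_\sigma$ so that $\C(\{U_i\})=\C$ on the nose, and this is where max intersection-completeness enters. When $A_\sigma$ is placed near $\mathrm{relint}\,F_\sigma$, the only previously built $U_j$ that can reach that neighborhood are those with $j\in\bigcap_{k\in M(\sigma)}\mu_k$, i.e.\ $j$ lying in the codeword $\rho_\sigma:=\bigcap_{k\in M(\sigma)}\mu_k$, which belongs to $\C$ precisely by hypothesis. So near the face $F_\sigma$ one is really solving a small instance of the same problem for the ``local code'' of codewords assigned to $F_\sigma$, a code living inside $2^{\rho_\sigma}$ and essentially having a single maximal codeword. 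I would therefore insert the $A_\sigma$ from the most central faces of $\Sigma$ outward, so that by the time $A_\sigma$ is added the atom $A_{\rho_\sigma}$ (and those of the codewords that must be nested still deeper) are already in place, leaving room to route $A_\sigma$ clear of $U_j$ for every $j\notin\sigma$; one then checks that each $\sigma\in\C$ retains a nonempty atom and that no $\sigma\notin\C$ acquires one, by tracing where a putative point of $\bigcap_{i\in\tau}U_i$ can lie. Because each local code has only one maximal codeword, its realization needs only a two–dimensional slab inside $\Sigma$ — which is the source of the ``$2$'' in $\max\{2,s-1\}$, and the entire content when $s\le 2$.

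The hard part, I expect, is not any single inequality but keeping every $U_i$ both convex and open while blocking all spurious codewords: inserting an atom near a face tends to force the convex hull $U_i$ to bulge into nearby regions, and one must guarantee these bulges land inside atoms that are already legitimately claimed rather than producing some $\sigma'\notin\C$. This is exactly the bookkeeping that intersection-completeness of the maximal codewords makes consistent — every ``forced'' intersection $\bigcap_{k\in T}\mu_k$ is guaranteed to be an honest codeword of $\C$ occupying the face it would otherwise invade — so I would expect the write-up to spend most of its effort organizing this interaction, either by a careful direct construction or by an induction on $s$ built on a clean statement of the single–maximal–codeword base case and a gluing of such pieces along the faces of $\Sigma$.
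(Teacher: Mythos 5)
First, a caveat about the comparison: the paper does not prove this statement at all --- it is imported verbatim from \cite{Cruz16} and used as a black box --- so there is no in-paper argument to measure yours against. Judged on its own terms, your skeleton (a simplex on the maximal codewords, the face $F_\sigma$ spanned by $M(\sigma)=\{k:\sigma\subseteq\mu_k\}$, atoms near relative interiors, $U_i$ a convex hull of atoms) is the natural one and is close in spirit to the construction actually used in \cite{Cruz16}; your observation that the ``local code'' at each face is governed by the single codeword $\rho_\sigma=\bigcap_{k\in M(\sigma)}\mu_k$, which lies in $\C$ exactly by max intersection-completeness, correctly identifies where the hypothesis must enter.

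The genuine gap is that what you actually verify is only the \emph{nerve} of $\{U_i\}$, and for convex codes the nerve is far from the whole story: the paper's own non-example $\C_0=\{12,23\}$ has a perfectly realizable nerve. Two essential steps are missing. First, at a point $p$ near $\mathrm{relint}\,F_T$ the realized codeword $\{m:p\in U_m\}$ is only constrained to be a subset of $\rho_T$, and arbitrary subsets of $\rho_T$ need not belong to $\C$; preventing spurious codewords requires arranging the sets $U_m$ for $m\in\rho_T$ locally as in the polygon-in-a-circle construction behind Proposition \ref{prev result}, not merely scattering disjoint atoms. Second, these local pictures at different faces must be glued so that each $U_m$ stays convex as one passes from a neighborhood of $F_T$ to a neighborhood of an adjacent face; this alignment requirement is precisely the analogue of condition \ref{lines_up} and step \ref{fill in the blanks} of Algorithm \ref{alg}, and it is where essentially all the work lies. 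Your prescription to ``insert atoms from the most central faces outward'' does not address it: once $U_j=\mathrm{int}\,\mathrm{conv}(\cdots)$ is the hull of atoms spread over many subfaces of $F_{\{j\}}$, it can sweep through the region where $A_\sigma$ must go for some $\sigma\not\ni j$, and nothing in the proposal rules this out. Finally, the dimension count is asserted rather than carried out: each local picture needs two directions for the circle construction, and for high-dimensional faces (e.g.\ $F_T=\Sigma$ itself when $M(\sigma)=[s]$) it is not obvious that $\mathbb{R}^{s-1}$ supplies them compatibly with convexity. As written this is a plausible plan, not a proof.
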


\begin{remark}
Theorem \ref{max result} holds when considering codes that can be realized by closed convex sets as well \cite{Cruz16}.
\end{remark}

The proof given for Proposition \ref{prev result} involves inscribing a polygon in a circle and labeling the resulting partition with the codewords of $\C$. We will use a similar construction to build a convex realization of a larger class of neural codes that satisfy the following property: for every codeword $\tau \in \C$ and any collection of maximal codewords $\mu_1,\dots,\mu_t \in \C$, we have $\tau \cap \mu_1 \cap \dots \cap \mu_t \in \C$. Such a code is called \textit{strongly max intersection-complete}. It is clear that all strongly max intersection-complete codes are max intersection-complete as well, and thus convex by Theorem \ref{max result}. However, we present a different construction for this class of codes that results in a smaller upper bound for $d(\C)$. For our purposes, we may always assume that the empty codeword is present in $\C$. The absence of the empty codeword affects neither the convexity of $\C$ nor its minimal embedding dimension.


\section{Constructing a Convex Realization}\label{sec_mainres}

Our approach is to build several circles partitioned into regions, label the regions with codewords of $\C$, and connect the circles into a network that, in most places, looks like a hypercylinder. The structure of this network is determined by a graph. Let $G_\C$ denote the graph whose vertices are maximal codewords of $\C$ and whose edges are nonempty maximal codewords, then we have the following.

\begin{theorem} \label{main}
Let $\C$ be a strongly max intersection-complete neural code. If $G_\C$ is a quasi-square grid in $\mathbb{R}^d$, then $\C$ is convex and $d(\C)\leq d+2$.
\end{theorem}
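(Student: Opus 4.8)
The plan is to promote the circle construction behind Proposition \ref{prev result} to a network of circles: one circle per maximal codeword of $\C$, joined by convex tubes along the edges of $G_\C$, with the whole network laid into $\mathbb{R}^{d}\times\mathbb{R}^{2}$ --- the $\mathbb{R}^{d}$ factor carrying the grid and the $\mathbb{R}^{2}$ factor carrying the circles (the two extra dimensions being exactly what Proposition \ref{prev result} already needs for a single code with one maximal codeword). A first, decisive observation is that the maximal codewords containing a fixed neuron $i$ form a clique of $G_\C$: any two of them share $i$, so their intersection is nonempty, so they span an edge. Since a subgraph of the integer lattice is bipartite, hence triangle-free, $G_\C$ has no $3$-clique; thus every neuron lies in at most two maximal codewords, and if it lies in two, these are the endpoints of one edge. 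So each $U_i$ will be supported on at most two circles plus the single tube between them, and each circle meets at most $2d$ tubes, one per axis direction at a lattice point. (We may assume every neuron occurs in some codeword, hence in some maximal codeword; a neuron in no codeword is simply placed far away.)

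For the construction, use the embedding to take the vertices of $G_\C$ as points of $\mathbb{Z}^{d}$, rescaled so adjacent vertices are far apart, and identify a maximal codeword $\mu$ with its lattice point $p_\mu$. Over a small $d$-cube about $p_\mu$, place a $2$-disk $\mathcal D_\mu$ partitioned by an inscribed polygon as in the proof of Proposition \ref{prev result} applied to $\C|_\mu$; this is legitimate because $\mu$ is a codeword (by max intersection-completeness), hence the unique maximal codeword of $\C|_\mu$, so the disjointness hypothesis there holds vacuously. I would arrange this so that for each neighbor $\nu$ of $\mu$ in $G_\C$ there is a boundary sector $S_{\mu\nu}$, facing the direction of $p_\nu - p_\mu$, whose atom is labeled $\mu\cap\nu$ and which abuts the central atom labeled $\mu$; the freedom in the polygon construction --- giving the $\mu$-region flat faces and routing the sets $U_i$ with $i\in\mu\cap\nu$ out along them --- makes this possible, with distinct sectors $S_{\mu\nu}$ disjoint since they point in distinct axes. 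For each edge $\{\mu,\nu\}$, join $S_{\mu\nu}$ to $S_{\nu\mu}$ by a straight, axis-parallel convex tube $\mathcal T_{\mu\nu}$ whose disk cross-section is partitioned to realize $\C|_{\mu\cap\nu}$ (again having unique maximal codeword $\mu\cap\nu$), carrying exactly the neurons of $\mu\cap\nu$. Finally set $U_i$ to be the union, over maximal $\mu\ni i$, of the designated convex subregion of $\mathcal D_\mu$, together with $\mathcal T_{\mu\nu}$ whenever $i\in\mu\cap\nu$, with the pieces --- flat docking faces, thin boxes along the grid axis --- chosen so the union is convex.

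The verification has two parts. First, each $U_i$ is convex: if $i$ lies in one maximal codeword it is a single convex disk-region built as in Proposition \ref{prev result}; if $i$ lies in two adjacent maximal codewords $\mu,\nu$ it is a cap of $\mathcal D_\mu$, a straight tube $\mathcal T_{\mu\nu}$, and a cap of $\mathcal D_\nu$, chosen so their convex hull adds nothing --- here it matters that a neuron meets at most two circles and that the tube runs along a single axis. Second, $\C(\{U_i\}) = \C$: the nonempty atoms are (i) those inside a disk $\mathcal D_\mu$ away from its docks, which give $\C|_\mu$; (ii) those inside a tube $\mathcal T_{\mu\nu}$, which give $\C|_{\mu\cap\nu}$; and (iii) those at a dock or where a tube leaves a disk, each an intersection of some local codeword $\tau$ with the maximal codewords whose circles or tubes are present there, hence of the form $\tau\cap\mu_{1}\cap\dots\cap\mu_{t}$. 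Every codeword of $\C$ occurs (inside $\C|_\mu$ for any maximal $\mu$ containing it), and by strong max intersection-completeness every type-(iii) atom is again a codeword of $\C$, so nothing extra appears. Since every circle and tube fits in the $\mathbb{R}^{2}$ cross-section over the $d$-dimensional grid, the realization lives in $\mathbb{R}^{d+2}$.

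The crux, I expect, is carrying out the per-disk design when $\mathcal D_\mu$ carries several docks at once: the polygon partition must realize $\C|_\mu$ and, for each of the up to $2d$ neighbors $\nu$, present a boundary sector labeled $\mu\cap\nu$ next to the $\mu$-region, all while keeping each $U_i$ convex. In particular a neuron $i\in\mu\cap\nu$ whose region in $\C|_\mu$ is spread across the disk must be routed to the $\nu$-dock without its convex hull swallowing an unrelated atom, and two docks on one disk must not interfere; likewise one must confirm that no atoms outside types (i)--(iii) arise where tubes pass by unrelated disks. Making these choices mutually compatible --- and tracking the type-(iii) atoms so they remain inside $\C$ --- is the technical heart; the rescaled lattice embedding, with its bounded degree, orthogonal edge directions, and absence of triangles, is precisely what supplies the room.
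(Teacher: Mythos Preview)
Your high-level plan matches the paper's: fibre a circle over each vertex of $G_\C$, a circle over each edge, and exploit that a quasi-square grid is triangle-free so every neuron lies in at most two maximal codewords. The use of strong max intersection-completeness to absorb the transition atoms $\tau\cap\mu_1\cap\cdots\cap\mu_t$ is also exactly right. But the concrete geometry you sketch has a genuine obstruction. You build $\mathcal D_\mu$ by applying Proposition~\ref{prev result} to $\C|_\mu$, independently for each $\mu$; in that construction the region for any $i\in\mu$ contains the entire inscribed polygon interior (the region labeled $\mu$), so it is not a ``cap'' but a large central region. If $i\in\mu\cap\nu$, then $U_i$ contains the centre of $\mathcal D_\mu$ and the centre of $\mathcal D_\nu$, and the straight segment between them---which lies over the edge $\{\mu,\nu\}$---must stay in $U_i$. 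A thin tube attached at a boundary dock cannot contain that segment; and since $\mathcal D_\mu$ and $\mathcal D_\nu$ were built independently from the different codes $\C|_\mu$ and $\C|_\nu$, there is no reason the cross-sectional shapes of $U_i$ in the two disks agree either. (The phrase ``facing the direction of $p_\nu-p_\mu$'' also conflates the $\mathbb R^2$ fibre with the $\mathbb R^d$ base: the disk lives entirely in the fibre and has no preferred direction in the base.)

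The paper's resolution is to abandon per-vertex independence. A single inscribed $r$-gon is used in \emph{every} circle, and a global labeling procedure (Algorithm~\ref{alg}) assigns codewords to segments so that for each $k\in\sigma_{i,j}$ the segments carrying $k$ in $S_i$, in $S_{i,j}$, and in $S_j$ occupy \emph{identical} positions (this is output property~(d)). The fibre over an edge is then the same full disk as over the vertices, only relabeled, and $U_k$ becomes a genuine product of a fixed $\mathbb R^2$-region with an axis-aligned $\ell^\infty$-box in $\mathbb R^d$, hence manifestly convex. In short, the crux you flagged is real, and the fix is not clever routing to docks but a coordinated common partition across all circles; Algorithm~\ref{alg} is exactly the bookkeeping that makes this coordination possible.
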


\subsection{What is a quasi-square grid?}
We now recall some terminology from graph theory so that we can define quasi-square grid. Let $G$ be a simple graph with vertex set $V(G)$ and edge set $E(G)$. For any $V \subset V(G)$, let $G\lvert_V$ denote the graph that has vertex set $V$ and edges $e \in E(G\lvert_V)$ whenever $e \in E(G)$ and $e$ is incident to two vertices of $V$. Given a vertex $v \in V(G)$, the degree of $v$, denoted $\deg(v)$, is the number of edges incident to $v$.

A \textit{path} of length $m+1$ in $G$ is a set of distinct vertices $v_0,v_1,\dots,v_m$ and edges $e_1,e_2,\dots,e_m$ such that $e_i$ is incident to both $v_{i-1}$ and $v_i$. When an edge is incident to two vertices, we call those vertices \textit{adjacent}. A path where $v_0=v_m$ is called a \textit{$m$-cycle}. 

Our interest will be in graphs that can be drawn in a special way in Euclidean space. We identify vertices and edges in a graph with points and intervals in Euclidean space, respectively. A graph is called a \textit{square grid} in $\mathbb{R}^d$ if its vertex set is ${\mathbb{Z}^d \cap [a_1,b_1] \times \dots \times [a_d, b_d]}$, and two vertices are connected by an edge whenever they are at distance one. This may be generalized as follows: 
  
\begin{definition}
A graph is called a \textit{quasi-square grid} in $\mathbb{R}^d$ if its vertices correspond to a subset of points in $\mathbb{Z}^d$ and two vertices $x=(x_1,\dots,x_d)$ and $y=(y_1,\dots,y_d)$ are connected by an edge only if their coordinates differ in exactly one place, say $x_i < y_i$, and there is no vertex $(z_1,\dots,z_d)$ such that $x_i < z_i < y_i$ and $z_j=x_j=y_j$ for $j \neq i$.
\end{definition}

Equivalently, every quasi-square grid can be obtained by starting with a square grid and then performing any of the following operations in any order:
\begin{itemize}
	\item Any edge may be removed
	\item Any vertex along with all edges incident to it may be removed
	\item If there exists a path of three distinct vertices such that any pair differs in only one coordinate and the vertex in the middle is of degree two, then the vertex in the middle may be removed and the other two vertices may be joined by an edge
\end{itemize}

\begin{example}
In Figure \ref{qsgrid}, we obtain the quasi-square grid on the right from the square grid on the left by taking the following steps: 
\begin{enumerate}
	\item Remove vertex A
	\item Remove vertex E
	\item Remove the edge joining B with C
	\item Remove the edge joining B with D
	\item Remove vertex D while joining the vertices to either side of it with an edge
\end{enumerate}

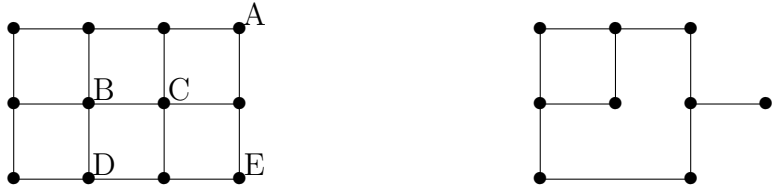
\begin{figure}
\begin{center}
\begin{tikzpicture}
 \draw[step=1cm] (0,0) grid (3,2);
 \foreach \Point in {(0,0), (1,0), (2,0), (3,0), (0,1), (1,1), (2,1), (3,1), (0,2), (1,2), (2,2), (3,2)}{
    \node at \Point {\textbullet};
    }
 \node at (1.2,0.2) {D};
 \node at (3.2,0.2) {E};
 \node at (1.2,1.2) {B};
 \node at (2.2,1.2) {C};
 \node at (3.2,2.2) {A};
 \foreach \Point in {(7,0), (9,0), (7,1), (8,1), (9,1), (10,1), (7,2), (8,2), (9,2)}{
    \node at \Point {\textbullet};
    }
 \draw[-] (7,0) -- (9,0);
 \draw[-] (7,0) -- (7,2);
 \draw[-] (7,1) -- (8,1);
 \draw[-] (7,2) -- (9,2);
 \draw[-] (8,2) -- (8,1);
 \draw[-] (9,0) -- (9,2);
 \draw[-] (9,1) -- (10,1);
\end{tikzpicture}
\caption{A square grid (left) and a quasi-square grid (right)} \label{qsgrid}
\end{center}
\end{figure}
\end{example}

\subsection{Attaching circles to $G_\C$}
Recall $G_\C$ is the graph whose vertices are the maximal codewords of $\C$ and two vertices are joined together by an edge precisely when the intersection of the maximal codewords is nonempty. When $G_\C$ is a quasi-square grid, it is a deformation retract of our desired construction. To create a convex realization of our code, we will attach a labeled partitioned circle to every point of the graph. Before describing how to partition and label the circles, we must define some constants. Suppose $\C$ is a code containing $s$ maximal codewords, $s^\prime$ pairs of which have nonempty intersection. Let $\mu_1, \dots, \mu_s$ be the maximal codewords and let $\sigma_{i,j}\defeq \mu_i \cap \mu_j$. We define the following:

\begin{itemize}
 \item $\lvert \sigma \rvert = \max_{\sigma_{i,j}\neq\emptyset} \#\big( \C\lvert_{\sigma_{i,j}} \big)-1$
 \item $ k_1= \max_{\mu_i} \#\big( \mu_i\setminus \bigcup_{j\neq i}\mu_j \big)$
 \item $k_2= \max_{\sigma_{i,j}\neq\emptyset} \#\big( \mu_i \setminus \sigma_{i,j} \big)$
 \item $r= 2^{k_1}-1+s^\prime \big((2^{k_2}-1)\lvert\sigma\rvert+(2^{k_2}-2)\big)$
\end{itemize}

In the following algorithm, when describing the partition of our circle by an inscribed polygon, we will refer to an interior region and several segments. By interior region we mean the part of the left circle labeled A in Figure \ref{circex}. By segments, we mean the parts of the left circle labeled B, C, and D. When we refer to the corresponding segment of a different circle, we mean the segment in the same relative position with respect to the other circle. In the below figure, C and E are corresponding segments.

\begin{figure}[H]
\begin{center}
 \circpicex{A}{B}{C}{D} \hspace{1in} \circpicex{}{}{E}{}
\end{center}
\caption{} \label{circex}
\end{figure}

\begin{alg}\label{alg} ~  \\
\textbf{Input:} A strongly max intersection-complete code $\C$ containing $s$ maximal codewords $\mu_i$, exactly $s^\prime$ pairs of which have nonempty intersection $\sigma_{i,j}$, such that $G_\C$ has no $3$-cycles \\
\textbf{Output:} A set of $s+s^\prime$ circles, denoted $S_1, S_2, \dots, S_s$ and $S_{i,j}$ for each nonempty $\sigma_{i,j}$, each partitioned into $r$ segments and an interior region bounded by those segments with the following properties:
\begin{enumerate}[label=(\alph*)]
	\item every codeword of $\C$ is a label of some region or segment \label{every_word_found}
	\item every region and segment either is labeled with a codeword of $\C$ or unlabeled \label{only_words_found}
	\item if $\tau$ is the label of a segment of a circle and $\eta$ is the label of the interior region of the same circle, then $\tau \subset \eta$ \label{int_is_big}
	\item if $\tau_i$ is the label of a segment of $S_i$ and $\tau_{i,j}$ is the label of the corresponding segment of $S_{i,j}$, then $\tau_i \cap \tau_{i,j} = \tau_i \cap \sigma_{i,j} = \mu_i \cap \tau_{i,j}$ where $\mu_i$ and $\sigma_{i,j}$ are the labels of the interior regions of circles $S_i$ and $S_{i,j}$, respectively \label{lines_up}
\end{enumerate}
\textbf{Algorithm:} 
\begin{enumerate}
	\item Construct $s+s^\prime$ disjoint circles in $\mathbb{R}^2$, one denoted $S_i$ for each maximal codeword $\mu_i$ and one denoted $S_{i,j}$ for each nonempty $\sigma_{i,j}$. \label{make circles}
	\item Inscribe an $r$-gon inside each of the circles. This partitions each circle into $r+1$ regions---$r$ segments enclosing an interior region.
	\item \underline{Add the $\mu_i$:} 
For each $S_i$, label the interior region $\mu_i$. \label{max_codewords}
	\item \underline{Add the $\sigma_{i,j}$:} 
For each $S_{i,j}$, label the interior region $\sigma_{i,j}$. \label{int_codewords}
	\item Label groups of segments of each circle in a manner such that corresponding segments of any two circles have the same group label: \label{split}
	\begin{enumerate}[label=(\roman*)]
		\item Make one unlabeled group of $2^{k_1}-1$ segments, \label{part1}
		\item Make $s^\prime$ groups of $2^{k_2}-2$ segments: one labeled $(i,j)^\prime$ for each $\sigma_{i,j}$, \label{part2}
		\item Make $s^\prime$ groups of $(2^{k_2}-1)\lvert\sigma\rvert$ segments: one labeled $(i,j)$ for each $\sigma_{i,j}$. \label{part3}
	\end{enumerate}
	\item \underline{Add codewords that are disjoint from every $\sigma_{i,j}$:} 
For every $S_i$, label the segments in the unlabeled group with the codewords $\tau$ that satisfy the conditions $\tau \in \C\lvert_{\mu_i}$ and $\tau \cap \sigma_{i,j} = \emptyset$ for all $j$ using each $\tau$ once. \label{disjoint_codewords}
	\item \underline{Add the codewords that are subsets of some $\sigma_{i,j}$:} 
For every $S_{i,j}$, choose $2^{k_2}-1$ blank segments from the group labeled $(i,j)$. Choose a codeword $\tau \subsetneq \sigma_{i,j}$ and label all of these segments $\tau$. Repeat this step for every $\tau \in \C\lvert_{\sigma_{i,j}}\setminus\{\sigma_{i,j}\}$. \label{subwords of sigma}
	\item \underline{Add the codewords that are supersets of some $\sigma_{i,j}$:}  For every $S_i$ and for all $j$ such that $\sigma_{i,j} \neq \emptyset$, label the segments in partition $(i,j)^\prime$ with the codewords $\tau \in \C\lvert_{\mu_i}$ such that $\sigma_{i,j} \subsetneq \tau \subsetneq \mu_i$ using each $\tau$ once. \label{superwords of sigma}
	\item \underline{Add missing codewords that are supersets of the codewords from step \ref{subwords of sigma}:} 
For each $S_i$ and for each $\tau \in \C\lvert_{\mu_i}$ such that $\tau \cap \sigma_{i,j} \neq \emptyset$, $\tau \nsubseteq \sigma_{i,j}$, and $\sigma_{i,j} \nsubseteq \tau$ for some $\sigma_{i,j}$, choose a blank segment of $S_i$ in the group labeled $(i,j)$ such that the corresponding segment in $S_{i,j}$ is labeled $\tau \cap \sigma_{i,j}$. Label the chosen segment of $S_i$ with $\tau$. \label{mediary_codewords}
	\item \underline{Ensure the circles ``properly align'':}
Label blank segments of $S_{i,j}$ with $\tau \cap \sigma_{i,j}$ if the corresponding segment in $S_i$ or $S_j$ is labeled $\tau$ and $\tau \cap \sigma_{i,j} \neq \emptyset$. Similarly, label blank segments of $S_i$ with $\tau$ if the corresponding segment of some $S_{i,j}$ is labeled $\tau \neq \emptyset$. Repeat until no additional segments are labeled. \label{fill in the blanks}
\end{enumerate}
\end{alg}

Before continuing, we must look carefully at step \ref{fill in the blanks} of the above algorithm. Since we label a segment based on the labels of corresponding segments of two other circles, we must be cautious that our choice of corresponding segment does not change the resulting label.

\begin{lemma}
Step \ref{fill in the blanks} of Algorithm \ref{alg} is a well-defined process.
\end{lemma}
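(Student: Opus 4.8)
The plan is to show that step \ref{fill in the blanks} is confluent: I will describe a target labeling that it must produce no matter in which order the blank segments are filled, which in particular forbids any segment from receiving two incompatible labels. Everything localizes. By step \ref{split}, corresponding segments of any two circles lie in the same group, and the two rules of step \ref{fill in the blanks} only ever move a label between corresponding segments of some $S_i$ and $S_{i,j}$ with $ij$ an edge of $G_\C$; so it suffices to fix one segment position $p$ in one group $g$ and reason about it alone. A short induction on the rounds of step \ref{fill in the blanks}, with steps \ref{disjoint_codewords}--\ref{mediary_codewords} as base case, gives the invariant that every label ever appearing on $S_i$ is a subset of $\mu_i$ and every label appearing on $S_{i,j}$ is a subset of $\sigma_{i,j}$. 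Hence a label $L$ on the $p$th segment of $S_i$ moves to that of $S_{i,j}$ as $L \cap \sigma_{i,j} = L \cap \mu_j$ and then to that of $S_j$ as the same set; iterating along a path $i_0,i_1,\dots,i_m$ in $G_\C$, a label $L \subseteq \mu_{i_0}$ arrives at $S_{i_m}$ as $L \cap \mu_{i_1} \cap \dots \cap \mu_{i_m}$, which depends only on the set of vertices traversed.

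Next I would read off from steps \ref{disjoint_codewords}--\ref{mediary_codewords} where the ``seed'' labels at position $p$ can sit. If $g$ is the unlabeled group of step \ref{part1}, the $p$th segment is labeled (step \ref{disjoint_codewords}) only on circles $S_i$, by codewords disjoint from every $\sigma_{i,k}$, and such a label dies at once under the propagation rule. If $g = (a,b)'$, it is labeled (step \ref{superwords of sigma}) only on $S_a$ and $S_b$, by codewords $\tau$ with $\sigma_{a,b} \subsetneq \tau \subsetneq \mu_a$ on $S_a$ and $\sigma_{a,b} \subsetneq \tau \subsetneq \mu_b$ on $S_b$. If $g = (a,b)$, it is labeled on $S_{a,b}$ (step \ref{subwords of sigma}, by some $\rho \subsetneq \sigma_{a,b}$) and possibly on $S_a$ and/or $S_b$ (step \ref{mediary_codewords}, by mediary codewords $\tau_a,\tau_b$ with $\tau_a \cap \sigma_{a,b} = \tau_b \cap \sigma_{a,b} = \rho$). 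So in each case at most two circles carry a seed that actually propagates, and they sit on or at the ends of a single edge $ab$.

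The key step is to bound how far a seed travels and to exclude collisions, and here the hypothesis that $G_\C$ has no $3$-cycle is essential: it makes any two distinct neighbors of a vertex non-adjacent, hence --- as an edge of $G_\C$ records a nonempty intersection of maximal codewords --- makes their maximal codewords disjoint. With the propagation formula this shows a seed on $S_a$ reaches nothing past $S_a$ itself and the circles $S_{a,c}$ and $S_c$ with $c$ adjacent to $a$, since extending any path one more step forces an intersection $\mu_a \cap \mu_\ell = \emptyset$; likewise a seed on $S_{a,b}$ reaches only $S_a$ and $S_b$. Finally the reaches of a seed on $S_a$ and a seed on $S_b$ (with $ab$ an edge) can meet only at $S_{a,b}$: a common neighbor of $a$ and $b$ would make a $3$-cycle, and circles $S_{a,c}$, $S_{b,d}$ coincide only when $c=b$ and $d=a$. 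At $S_{a,b}$ the two competing labels are $\tau_a \cap \sigma_{a,b} = \rho = \tau_b \cap \sigma_{a,b}$ (for $g=(a,b)'$ the analogous check gives the common value $\sigma_{a,b}$), and the seed on $S_{a,b}$ agrees on $S_a$ and $S_b$ with whatever step \ref{mediary_codewords} placed there.

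Collecting these facts produces, for each $p$, an unambiguous target labeling $\lambda_p$; since step \ref{fill in the blanks} only labels blank segments and always consistently with the propagation rule, running it in any order terminates at exactly $\lambda_p$, which is the asserted well-definedness. I expect the real work to be the bookkeeping of the last two paragraphs --- the split into which of $S_a, S_b$ carry a seed, and checking that any two propagation routes into one segment agree --- rather than any single computation; the no-$3$-cycle hypothesis is exactly what keeps those routes short enough to enumerate by hand.
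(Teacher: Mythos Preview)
Your argument is correct and takes a genuinely different route from the paper's proof. The paper argues locally at the moment of a potential conflict: it splits into the two cases ``blank segment on a vertex circle $S_\ell$'' and ``blank segment on an edge circle $S_{k,\ell}$'', and in each case chases the group label $(i,j)$ or $(i,j)'$ through the no-$3$-cycle hypothesis to show either that the two competing sources cannot both be nonempty (vertex case) or that they give the same intersection with $\sigma_{k,\ell}$ (edge case). You instead run a global confluence argument: localize to a single segment position and group, identify the at-most-three seeded circles $S_a,S_b,S_{a,b}$, bound the reach of each seed to distance one in $G_\C$ via the propagation formula $L\mapsto L\cap\mu_{i_1}\cap\cdots\cap\mu_{i_m}$ and the no-$3$-cycle hypothesis, and then check agreement at overlaps. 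The paper's approach is shorter and needs no explicit target labeling; yours is more structural and actually produces the final labeling $\lambda_p$ explicitly, which is extra information the paper does not extract. One small imprecision: your claim that the reaches of the $S_a$- and $S_b$-seeds ``meet only at $S_{a,b}$'' is not literally true---they also meet at $S_a$ and $S_b$---but those circles carry the seeds themselves and are therefore not blank, so step~\ref{fill in the blanks} never touches them; you implicitly use this when you invoke ``only labels blank segments,'' and it would be worth saying so.
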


\begin{proof}
We will break this problem into two cases: filling in a blank segment of $S_{k,\ell}$ and filling in a blank segment of $S_\ell$. In either case, note that the codewords added in step \ref{disjoint_codewords} never satisfy $\tau \cap \sigma_{i,j} \neq \emptyset$, therefore it is sufficient to consider only the segments that are in a group labeled $(i,j)$ or $(i,j)^\prime$ for some $i,j$.

Suppose a blank segment of $S_\ell$ in either group $(i,j)$ or group $(i,j)^\prime$ corresponds to the segment labeled $\tau$ in $S_{k,\ell}$ and the segment labeled $\tau^\prime$ in $S_{\ell,m}$ where $\tau$ and $\tau^\prime$ are both nonempty. We show this leads to a contradiction. By construction, if $\tau$ is a codeword in a segment of group $(i,j)$ and $(i,j)^\prime$, then $\tau \cap \mu_i \cap \mu_j \neq \emptyset$ and either $\tau \subset \mu_i$ or $\tau \subset \mu_j$. Since $\tau$ is a label of a segment of $S_{k,\ell}$, $\tau \subset \sigma_{k,\ell} = \mu_k \cap \mu_\ell$. Thus, either $\tau \subset \mu_i \cap \mu_k \cap \mu_\ell$ or $\tau \subset \mu_j \cap \mu_k \cap \mu_\ell$. By a similar argument, the same containments hold for $\tau^\prime$ if $\mu_k$ is replaced by $\mu_m$. If the intersection of three distinct maximal codewords is nonempty, then these three codewords form a $3$-cycle in $G_\C$. Therefore, either $(i,j)=(k,m)$ or $\ell \in \{i, j\}$. If $(i,j)=(k,m)$, then $\sigma_{k,m} \neq \emptyset$ and $\mu_k,\mu_\ell,\mu_m$ forms a $3$-cycle in $G_\C$. On the other hand, if $\ell \in \{i,j\}$, say $\ell=i$, then we have $\tau \cap \mu_\ell \cap \mu_j \neq \emptyset$ and $\tau \subset \mu_k \cap \mu_\ell$. Thus, $\mu_\ell \cap \mu_j \cap \mu_k \neq \emptyset$. Since $G_\C$ has no $3$-cycles, this forces $k=j$. However, applying the same argument with $\tau^\prime$ leads to $m=j$. This contradicts $k \neq m$.

Instead, suppose a blank segment of $S_{k,\ell}$ in either group $(i,j)$ or $(i,j)^\prime$ corresponds to both the segment labeled $\tau$ in $S_k$ and the segment labeled $\tau^\prime$ in $S_\ell$ where both $\tau \cap \sigma_{k,\ell}$ and $\tau^\prime \cap \sigma_{k,\ell}$ are nonempty. We show $\tau \cap \sigma_{k,\ell} = \tau^\prime \cap \sigma_{k,\ell}$ in this case. Since $\tau$ is a label of a segment in group $(i,j)$ or group $(i,j)^\prime$, $\tau \subset \mu_i$ or $\tau \subset \mu_j$. Moreover, since $\tau \cap \sigma_{k,\ell} \neq \emptyset$ and $\sigma_{k,\ell}= \mu_k \cap \mu_\ell$, we must have at least one of $\mu_i \cap \mu_k \cap \mu_\ell$ and $\mu_j \cap \mu_k \cap \mu_\ell$ nonempty. Thus, either $i \in \{k,\ell\}$ or $j \in \{k,\ell\}$. 

Without loss of generality, assume $i=k$. Similarly, either $\tau^\prime \subset \mu_k$ or $\tau^\prime \subset \mu_j$. If $\tau^\prime \subset \mu_k$, then $\tau^\prime \subset \sigma_{k,\ell}$. Since $\tau^\prime \cap \mu_m \subset \mu_k \cap \mu_\ell \cap \mu_m = \emptyset$ for all $m \notin \{k,\ell\}$, $\tau^\prime$ is not the label for any segment in a group labeled $(k,j)$ or $(k,j)^\prime$ unless $j=\ell$. On the other hand, if $\tau^\prime \subset \mu_j$, then we have $\mu_j \cap \mu_k \cap \mu_\ell \neq \emptyset$. In this case, we also have $j=\ell$.

We have reduced the problem to the case where $\tau$ and $\tau^\prime$ are labels of corresponding segments in either the $(k,\ell)$ group or the $(k,\ell)^\prime$ group. If they are labels of segments in $(k,\ell)$ group, the corresponding segment of $S_{k,\ell}$ is not unlabeled. In fact, the segments of $S_k$ and $S_\ell$ could have only been labeled when comparing them with $S_{k,\ell}$ during step \ref{fill in the blanks}. Therefore, $\tau$ and $\tau^\prime$ are labels in the $(k,\ell)^\prime$ group. However, this means that $\tau$ and $\tau^\prime$ were added in step \ref{superwords of sigma}. Thus, $\tau \cap \sigma_{k,\ell} = \tau^\prime \cap \sigma_{k,\ell} = \sigma_{k,\ell}$.
\end{proof}

Now that we are confident that there is no ambiguity in Algorithm \ref{alg}, we turn our attention to verifying we get an output and that it matches what we expect.

\begin{proof}[Proof of Correctness of Algorithm \ref{alg}]
First we show that our algorithm terminates. The only possible obstruction to termination is the last step of the algorithm which is repeated until no additional segments of any circle are labeled. However, since we have finitely many circles partitioned into finitely many regions, this condition must be satisfied in finite time.

Next, we want to show \ref{every_word_found}. Note that every element of $\C$ is in exactly one of the following four sets: \\
$A=\{\mu_i, \sigma_{i,j} : 1\leq i,j \leq s\}$ \\
$B=\{\tau : \tau \subsetneq \sigma_{i,j} \text{ for some } i,j\}$ \\
$C=\{\tau : \sigma_{i,j} \subsetneq \tau \subsetneq \mu_i \text{ for some } i,j\}$ \\
$D=\{\tau : \tau \nsubseteq \sigma_{i,j} \text{ and } \sigma_{i,j} \nsubseteq \tau \text{ for any } i,j \}$

The codewords in $A$ are added to our construction in steps \ref{max_codewords} and \ref{int_codewords}, the codewords in $B$ are added in step \ref{subwords of sigma}, and the codewords in $C$ are added in step \ref{superwords of sigma}. For every $\tau \in D$, either there exists some $i,j$ such that $\tau \cap \sigma_{i,j} \neq \emptyset$ or there does not. If such an $i,j$ exists, then $\tau$ is added in step \ref{mediary_codewords}. Otherwise, $\tau$ is added in step \ref{disjoint_codewords}. Thus, all codewords of $\C$ appear as a label at least once.

To show \ref{only_words_found}, we note that the only time we label a region by a codeword that was not specifically chosen from the list of elements of $\C$ is in step \ref{fill in the blanks}. Here, the labels appearing that are not already in our construction elsewhere are of the form $\tau \cap \sigma_{i,j} = \tau \cap \mu_i \cap \mu_j$ for some $\tau$ already appearing in our construction, and thus already in $\C$. However, since $\C$ is strongly max intersection-complete, this codeword is necessarily an element of $\C$. Thus, every labeled region of our construction is labeled with a codeword of $\C$.

Next, we prove \ref{int_is_big}. The segments of the circles are only labeled in steps \ref{disjoint_codewords}-\ref{fill in the blanks}. In steps \ref{disjoint_codewords}-\ref{mediary_codewords}, condition \ref{int_is_big} is satisfied by construction. In step \ref{fill in the blanks}, when we label a segment of $S_{i,j}$, the label is a subword of $\sigma_{i,j}$ by construction. When we label a segment of $S_i$ with $\tau$, then $\tau$ is already a label of $S_{i,j}$. Since we have already proven \ref{int_is_big} for the $S_{i,j}$, then we have $\tau \subset \sigma_{i,j} \subset \mu_i$ as desired.

Finally, we show the output satisfies \ref{lines_up}. We consider two cases: when at least one of the $\tau$ is the empty codeword and when neither $\tau$ is empty. In the former case, note that \ref{lines_up} is satisfied if $\tau_i=\tau_{i,j}=\emptyset$. Step \ref{fill in the blanks} ensures that we are never in the situation where $\tau_i=\emptyset$ and $\tau_{i,j}\neq \emptyset$. Moreover, if $\tau_i \neq \emptyset$ and $\tau_{i,j}=\emptyset$, then step \ref{fill in the blanks} requires us to label $\tau_{i,j}$ with $\tau_i \cap \sigma_{i,j}$. Thus, we must have $\emptyset = \tau_{i,j} = \tau_i \cap \sigma_{i,j}$ as desired. 

In the latter case, we note that the only steps where such a nonempty $\tau$ is added to the construction when the other is already nonempty are steps \ref{mediary_codewords} and \ref{fill in the blanks}. In step \ref{mediary_codewords}, we have $\tau_{i,j}=\tau_i \cap \sigma_{i,j}$ by construction, therefore $\tau_i \cap \tau_{i,j} = \tau_i \cap \sigma_{i,j}$. Moreover, since $\tau_{i,j} \subset \tau_i \subset \mu_i$, we have $\tau_i \cap \tau_{i,j} = \mu_i \cap \tau_{i,j}$ as well. In step \ref{fill in the blanks}, we either have $\tau_{i,j}=\tau_i \cap \sigma_{i,j}$ or $\tau_i=\tau_{i,j}$. In either case, it is immediate that $\tau_i \cap \tau_{i,j} = \tau_i \cap \sigma_{i,j} = \mu_i \cap \tau_{i,j}$.
\end{proof}

\subsection{A convex realization}
Now that we have labeled circles, we will use $G_\C$ to complete the construction. Suppose that $G_\C$ is a quasi-square grid in $\mathbb{R}^d$ and let $\pi_d: \mathbb{R}^{d+2} \rightarrow \mathbb{R}^d$ be the projection that forgets the first two coordinates. Then our construction is a $(d+2)$-dimensional construct with image $G_\C$ under $\pi_d$ and the fiber over a given point, $x$, is the circle $S_i$ if $\lVert x - \mu_i \rVert_\infty \leq \frac{1}{3}$ or the circle $S_{i,j}$ if it is on the edge connecting the vertices $\mu_i$ and $\mu_j$ but does not satisfy the above inequality for either vertex. Finally, for every $i \in [n]$, we define $U_i$ to be the interior of the union of every region of our construction whose label contains $i$.

\begin{example} We use Algorithm \ref{alg} to find a convex realization for the following code (where maximal codewords and the intersection of maximal codewords are in bold)
\[
\C_2=\{\mathbf{1234}, 12, 2, 124, 234, 134, \mathbf{34}, 4, \mathbf{345}, 5, 45\}
\]
\begin{center} 
	$G_{\C_2}=$
	\begin{tikzpicture}
	\node[draw](left) at (0,.2) {1234};
	\node[draw](right) at (2.3,.2) {345};
	\draw (left) -- (right) node[midway, above]{34};
	\end{tikzpicture}
\end{center}
Looking over our constants, we have $s^\prime=1, \lvert \sigma \rvert=1, \text{ and } k_1=k_2=2$, thus $r=8$. We will now follow give a step-by-step construction of a convex realization of $\C$ in $\mathbb{R}^3$ using our method. For the sake of brevity, we will use the subscript labels as written in the algorithm (i.e.\ $\sigma_{1,2}=34$). Based on the above graph,  we construct a cylinder whose circular cross-sections are partitioned and labeled as follows:

\vspace{\baselineskip}
\textbf{Steps \ref{make circles}-\ref{int_codewords}} \\
\begin{tabular}{ccc}
	\circpicnodiv{$1234$}{}{}{}{}{}{}{}{} &
	\circpicnodiv{$34$}{}{}{}{}{}{}{}{} &
	\circpicnodiv{$345$}{}{}{}{}{}{}{}{}
\end{tabular}

\textbf{Step \ref{split}} \\
\begin{tabular}{ccc}
	\circpic{$1234$}{}{}{}{}{}{}{}{} &
	\circpic{$34$}{}{}{}{}{}{}{}{} &
	\circpic{$345$}{}{}{}{}{}{}{}{}
\end{tabular}

\textbf{Step \ref{disjoint_codewords}} \\
\begin{tabular}{ccc}
	\circpic{$1234$}{$12$}{$2$}{}{}{}{}{}{} &
	\circpic{$34$}{}{}{}{}{}{}{}{} &
	\circpic{$345$}{$5$}{}{}{}{}{}{}{}
\end{tabular}

\textbf{Step \ref{subwords of sigma}} \\
\begin{tabular}{ccc}
	\circpic{$1234$}{$12$}{$2$}{}{}{}{}{}{} &
	\circpic{$34$}{}{}{}{}{}{$4$}{$4$}{$4$} &
	\circpic{$345$}{$5$}{}{}{}{}{}{}{}
\end{tabular}

\textbf{Step \ref{superwords of sigma}} \\
\begin{tabular}{ccc}
	\circpic{$1234$}{$12$}{$2$}{}{$234$}{$134$}{}{}{} &
	\circpic{$34$}{}{}{}{}{}{$4$}{$4$}{$4$} &
	\circpic{$345$}{$5$}{}{}{}{}{}{}{}
\end{tabular}

\textbf{Step \ref{mediary_codewords}} \\
\begin{tabular}{ccc}
	\circpic{$1234$}{$12$}{$2$}{}{$234$}{$134$}{}{}{$124$} &
	\circpic{$34$}{}{}{}{}{}{$4$}{$4$}{$4$} &
	\circpic{$345$}{$5$}{}{}{}{}{}{}{$45$}
\end{tabular}

\textbf{Step \ref{fill in the blanks}- first pass} \\
\begin{tabular}{ccc}
	\circpic{$1234$}{$12$}{$2$}{}{$234$}{$134$}{$4$}{$4$}{$124$} &
	\circpic{$34$}{}{}{}{$34$}{$34$}{$4$}{$4$}{$4$} &
	\circpic{$345$}{$5$}{}{}{}{}{$4$}{$4$}{$45$}
\end{tabular}

\textbf{Step \ref{fill in the blanks}- second pass} \\
\begin{tabular}{ccc}
	\circpic{$1234$}{$12$}{$2$}{}{$234$}{$134$}{$4$}{$4$}{$124$} &
	\circpic{$34$}{}{}{}{$34$}{$34$}{$4$}{$4$}{$4$} &
	\circpic{$345$}{$5$}{}{}{$34$}{$34$}{$4$}{$4$}{$45$}
\end{tabular}

\textbf{Completed Construction} \\
\begin{tabular}{ccc}
	\circpicnodiv{$1234$}{$12$}{$2$}{}{$234$}{$134$}{$4$}{$4$}{$124$} &
	\circpicnodiv{$34$}{}{}{}{$34$}{$34$}{$4$}{$4$}{$4$} &
	\circpicnodiv{$345$}{$5$}{}{}{$34$}{$34$}{$4$}{$4$}{$45$}
\end{tabular}
\end{example}

As the above example shows, there may be redundant segments that remain empty for the entirety of the algorithm. These segments are considered to be points that correspond to the empty codeword or may be left out of the construction entirely. Now that we have some familiarity with the content of Theorem \ref{main}, we turn our attention its proof.

\begin{proof}[Proof of Theorem \ref{main}]
Assume that $\C$ is a strongly max intersection-complete neural code such that $G_\C$ is a connected quasi-square grid in $\mathbb{R}^d$. Note that the definition of quasi-square grid prevents the presence of $3$-cycles in $G_\C$. Let $\mu_1, \dots, \mu_s$ be the vertices of  $G_\C$ and $\sigma_{i,j}$ be $\mu_i \cap \mu_j$. We will now construct a convex realization of $\C$ in $\mathbb{R}^{d+2}$ by labeling regions of a geometric object with numbers and then defining $U_i$ to be the interior of the union of all regions that contain the number $i$ in its label.

Let $G^\prime=\{y : \lVert y - x\rVert_\infty \leq \frac{1}{3} \text{ for some } x\in G_\C\}$ where $x=(x_1,\dots,x_d)$ and $y=(y_1,\dots,y_d)$. Let $P_x = \hat{D} \times x_1 \times \dots \times x_d$ where $x \in G^\prime$ and $\hat{D}$ is a disk centered at $(0,0)$ of radius $\frac{1}{3}$, and let $P= \cup_{x\in G^\prime} P_x$. In each $P_x \subset P$, inscribe a regular $r$-gon. Follow Algorithm \ref{alg} to obtain several labeled circles. If $\lVert x-y \rVert_\infty \leq \frac{1}{3}$ where $y$ is the coordinate of the vertex of $\mu_i$, then label $P_x$ as $S_i$. Otherwise, label $P_x$ as $S_{i,j}$ when $\lVert x-y \rVert_\infty \leq \frac{1}{3}$ where $y$ is a coordinate of $\sigma_{i,j}$.

We now confirm that the interior of the union of all regions with an $i$ in their label forms a convex set. First, we note that no $i$ is contained in $\sigma_{j,k} \cap \sigma_{\ell,m}$ when $\{j,k\} \neq \{\ell,m\}$. If we could find such $\sigma_{j,k},\sigma_{\ell,m}$ such that at least three of $j,k,\ell,m$ are distinct, say $j, k, \text{ and } \ell$, then we would have $\{i\} \subset \mu_j \cap \mu_k \cap \mu_\ell$. However, this would imply that $G_\C$ contains the three-cycle $\mu_j, \mu_k, \mu_\ell$ which contradicts that $G_\C$ is a quasi-square grid. Thus, the regions with an $i$ in their label must either be contained in either a single $S_j$ or contained in exactly one triple $S_j, S_{j,k}, \text{ and } S_k$. If the regions are all contained in a single $S_i$, then this is the cross product of the construction as used in the proof of proposition \ref{prev result} with connected intervals. Similarly, if $i$ is contained in exactly $S_j, S_{jk}, \text{ and } S_k$, then Algorithm \ref{alg} ensures that the regions containing $i$ in their label for each of these pieces are in the same location. Thus, our set has the same shape as it does in the case where $i$ only appears in a single $S_j$. Hence, each $U_i$ is convex. To complete our proof, we note that $d(\C) \leq \dim (P)= d+2$.
\end{proof}

\begin{remark}
Theorem \ref{main} holds with the same bound on dimension if we consider codes realized with only closed convex sets. The same construction and argument apply with the additional step of taking the closure of each $U_i$. 
\end{remark}

Note that Proposition \ref{prev result} is the special case of Theorem \ref{main} where $G_\C$ is a graph with no edges. Furthermore, $d(\C)$ is bounded by one less than the number of maximal codewords in $\C$ in Theorem \ref{max result} while the bound given in Theorem \ref{main} is usually much smaller.

\begin{cor}
	If $\C$ is strongly max intersection-complete and $G_\C$ is a union of disjoint paths, then $\C$ is convex and $d(\C) \leq 3$.
\end{cor}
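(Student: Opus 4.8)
The plan is to observe that a vertex-disjoint union of paths can be drawn in the line as a quasi-square grid, and then to apply Theorem \ref{main} with $d = 1$. Since $\C$ is already assumed strongly max intersection-complete, nothing else is required: Theorem \ref{main} will yield convexity together with the bound $d(\C) \le 1 + 2 = 3$.

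To produce the realization, write $G_\C$ as a vertex-disjoint union of paths $P^{(1)}, \dots, P^{(t)}$, where $P^{(k)}$ has $m_k$ vertices, and set $N = m_1 + \dots + m_t$. I would start from the path on the vertices $0, 1, \dots, N-1$ of $\mathbb{Z} \subset \mathbb{R}^1$; this is a square grid in $\mathbb{R}^1$. I would then delete the $t-1$ edges $\{m_1 + \dots + m_k - 1,\ m_1 + \dots + m_k\}$ for $k = 1, \dots, t-1$. Deleting an edge is one of the operations permitted in the construction of a quasi-square grid, so the resulting graph is a quasi-square grid in $\mathbb{R}^1$; and it is isomorphic to $G_\C$, with the surviving components being precisely copies of $P^{(1)}, \dots, P^{(t)}$.

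With $G_\C$ exhibited as a quasi-square grid in $\mathbb{R}^1$, Theorem \ref{main} applies and gives at once that $\C$ is convex with $d(\C) \le 3$. I do not expect a genuine obstacle here; the corollary is essentially the translation of the hypothesis ``$G_\C$ is a union of disjoint paths'' into the language of quasi-square grids in dimension one. The only point worth a remark is that $G_\C$ may be disconnected: the maximal codewords containing a fixed neuron $i$ pairwise intersect in $i$, hence form a clique in $G_\C$, and a disjoint union of paths is triangle-free, so this clique is a single vertex or a single edge. Consequently each $U_i$ is assembled within a single connected component of the construction of Theorem \ref{main}, and the convexity verification given there applies verbatim to each component.
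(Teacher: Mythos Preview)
Your proposal is correct and follows essentially the same approach as the paper: both arguments exhibit $G_\C$ as a quasi-square grid in $\mathbb{R}^1$ (the paper places each path vertex-by-vertex and translates components apart, while you delete edges from a single long path, an immaterial difference) and then invoke Theorem \ref{main} with $d=1$. Your closing remark about each $U_i$ living in a single component is a welcome clarification but not a departure from the paper's reasoning.
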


\begin{proof}
	We shall draw $G_\C$ as a quasi-square grid in $\mathbb{R}$. Choose a vertex $v_0$ such that $\deg(v_0)=1$ and place it at $0$. We then proceed by induction: if we have placed the vertex $v_m$ at $m$ and $v_m$ is adjacent to another vertex, say $v_{m+1}$, then we place $v_{m+1}$ at $m+1$. Repeat this process for each connected component of $G_\C$ translating the graph so that the connected components of the graph do not overlap one another. We then apply Theorem \ref{main} with $d=1$.
\end{proof} 

Checking that $G_\C$ is a quasi-square grid may not be easy in all cases. Aside from the case that $G_\C$ contains a $3$-cycle, it is not immediately clear what properties will prevent the rigid graph structure that Theorem \ref{main} requires. However, the following corollary shows us that if $G_\C$ fails to be a quasi-square grid, the obstruction will be found in the cycle structure of the graph.

\begin{cor}
	Let $\C$ be a strongly max intersection-complete neural code. If $G_\C$ has no $3$-cycles and any two distinct cycles of $G_\C$ are disjoint (i.e., no two cycles have a vertex in common), then $\C$ is a convex code and 
\[ d(C) \leq D = 2+\lvert V(G_\C) \rvert + \#\{\text{cycles in }G_\C \} - \#\{\text{vertices contained in some cycle in }G_\C \}. \]
\end{cor}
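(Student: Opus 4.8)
The plan is to deduce the corollary from Theorem~\ref{main} after a structural analysis of $G_\C$ and a small modification of the construction in its proof, in which cycles are realized by solid tori rather than by cylinders. First I would record the shape of $G_\C$: the hypotheses (no $3$-cycles; pairwise vertex-disjoint cycles) force every block of $G_\C$ to be a bridge or a cycle of length at least $4$, so $G_\C$ is obtained from a forest by ``blowing up'' finitely many of its vertices into pairwise disjoint cycles $C_1,\dots,C_c$. Contracting each $C_i$ back to a single vertex recovers that forest $G'$, and since the cycles are vertex-disjoint,
\[
|V(G')| = |V(G_\C)| - \#\{\text{vertices in some cycle}\} + c = D-2 .
\]
A forest on $N$ vertices realizes as a quasi-square grid in $\mathbb{R}^{N-1}$: list its vertices so each one after the first has exactly one earlier neighbour, place the first arbitrarily, and give every subsequent vertex a fresh coordinate equal to $1$ while copying the rest of its parent's coordinates, so it becomes a leaf joined only to its parent with no vertex strictly between them. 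Hence $G'$ embeds as a quasi-square grid in $\mathbb{R}^{D-3}$.

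Next, run Algorithm~\ref{alg} on $\C$ itself---legitimate since $G_\C$ has no $3$-cycles---to obtain the labelled circles $S_i$, $S_{i,j}$, and assemble them along $G'\subset\mathbb{R}^{D-3}$ exactly as in the proof of Theorem~\ref{main}; this gives an object in $\mathbb{R}^{D-3}\times\mathbb{R}^2=\mathbb{R}^{D-1}$. Then re-expand each contracted vertex $w_i$ into the cycle $C_i$: the piece of the construction lying over $C_i$ is a solid torus $S^1\times\hat{D}$, which I would embed in the standard unknotted fashion using the two disk coordinates together with one extra coordinate; since the $w_i$ are distinct points and the cycles are vertex-disjoint, a single extra coordinate suffices for all of them simultaneously, so the whole object lives in $\mathbb{R}^{D-1}\times\mathbb{R}=\mathbb{R}^{D}$ and $d(\C)\le D$. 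One must check the labelling closes up around each loop: a circle $S_j$ sitting over a vertex of $C_i$ meets exactly the two edges of $C_i$ at that vertex, and the groups of segments that Algorithm~\ref{alg} attaches to those two edges are disjoint (distinct pairs get distinct groups in step~\ref{split}), so traversing $C_i$ once and returning to $S_j$ creates no conflict---this is precisely the situation handled by the well-definedness lemma for step~\ref{fill in the blanks}.

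Finally I would check each $U_i$ is convex. Since $G_\C$ has no $3$-cycle, the set $\{j : i\in\mu_j\}$ induces a clique in $G_\C$ and so has at most two elements, which must be adjacent; by property~\ref{int_is_big} this is where every region whose label contains $i$ lives. Thus $U_i$ is supported over a single circle $S_j$, or over a single triple $S_j,S_{j,k},S_k$ strung along one edge $\mu_j\mu_k$---in particular it never wraps around a solid torus and never turns a corner of a cycle---so locally the construction coincides with that in the proof of Theorem~\ref{main} (the Proposition~\ref{prev result} picture crossed with an interval, or the triple picture), and the same argument shows $U_i$ is convex; taking closures gives the closed-convex variant. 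The step I expect to be the main obstacle is the bookkeeping around the solid tori: verifying that each can be embedded with a framing that keeps the inscribed $r$-gon---hence the labelling---consistent all the way around the loop, and that these tori glue to the forest part of the construction, and to one another, using only one extra coordinate regardless of the number of cycles and the number of bridges leaving each.
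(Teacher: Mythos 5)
Your structural analysis is correct (blocks are bridges or cycles of length at least $4$, contracting the pairwise disjoint cycles yields a forest on $D-2$ vertices, and the coordinate count comes out to $D$), but the geometric realization has a genuine gap: realizing a cycle as a \emph{round} solid torus destroys the convexity of the $U_i$. In the proof of Theorem~\ref{main}, a region $U_i$ supported over an edge $\mu_j\mu_k$ is a product of a planar region of the disk $\hat{D}$ with the $\lVert\cdot\rVert_\infty$-neighborhood of a straight, axis-parallel segment---a product of two convex sets---and this is the entire source of convexity. If the core of your torus is an actual circle (or any curved arc), the fiber of the construction over a single edge of $C_i$ sweeps out a bent prism, and a planar region crossed with a neighborhood of a curved arc is not convex; your observation that $U_i$ ``never wraps around the torus and never turns a corner'' does not help, because on a round circle every edge is curved. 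The only repair is to realize each cycle as a polygon with axis-parallel straight edges, i.e.\ as a rectangle in the integer grid using one fresh coordinate---but at that point you have simply shown that $G_\C$ itself is a quasi-square grid in $\mathbb{R}^{D-2}$, which is exactly what the paper does (it lays $\ell$ consecutive cycle vertices along one existing axis, lifts the last two to height $1$ in a new coordinate, and closes the loop with one long grid edge), and Theorem~\ref{main} then applies verbatim with no tori, no modified construction, and no need to re-verify the labelling or convexity arguments.

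The gluing issue you flag as ``the main obstacle'' is also a real hole in the contract-and-re-expand route: after blowing $w_i$ back up into $C_i$, every pendant subtree that was attached to $w_i$ in the forest must be translated so that it hangs off the correct vertex of $C_i$, and you must check this preserves the quasi-square-grid condition. This is doable (each subtree lives in its own fresh coordinates, so the translations cannot collide), but it is precisely the bookkeeping that the paper's direct induction---add one non-cycle vertex, or one entire cycle, at a time, each consuming one new coordinate---avoids doing twice. I would drop the contraction and the solid tori, prove directly by induction that $G_\C$ embeds as a quasi-square grid in $\mathbb{R}^{D-2}$, and then cite Theorem~\ref{main} as a black box; your labelling-consistency remark about the two edge-groups at each cycle vertex being disjoint is correct but is already subsumed by properties \ref{lines_up} and the well-definedness lemma once the grid embedding is in hand.
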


\begin{proof}
Suppose that $G_\C$ is a connected graph with no $3$-cycles and any two cycles of $G_\C$ are disjoint. We will show that $G_\C$ is a quasi-square grid then appeal to Theorem \ref{main}. Let $v_0 \in V(G_\C)$. Place $v_0$ at the origin in $\mathbb{R}^{D-2}$ and call this graph $\hat{G}$. Note that $\hat{G}$ is a quasi-square grid. We will now proceed inductively. 
	
Suppose $V(\hat{G}) = \{v_0,v_1,\dots,v_{m-1}\}$, $\hat{G}=G_\C\lvert_{V(\hat{G})}$, and $\hat{G}$ is a quasi-square grid. Additionally, suppose that if $\hat{G}$ contains at least two vertices that are in some cycle of $G_\C$, then $\hat{G}$ contains all of the vertices in that cycle. Denote the coordinates of the vertex $v_{m-1}$ in $\hat{G}$ by $(x_1,\dots,x_k,0,\dots,0)$. If $V(G_\C)=V(\hat{G})$, then we are done. Otherwise, since $G_\C$ is connected, there is some vertex $v_m\in V(G_\C)\setminus V(\hat{G})$ such that $v_m$ is adjacent to $v_i$ for some $i<m$ in $G_\C$. Without loss of generality, assume that $v_m$ is adjacent to $v_{m-1}$. First we note that $v_m$ must not be adjacent to any vertex $v_j$ with $j<m-1$, otherwise $v_m, v_{m-1}, v_j$ would be three vertices in some cycle of $G_\C$ and hence $v_m$ would already be a vertex in $\hat{G}$. Either $v_{m-1}$ and $v_{m}$ are two vertices in a cycle of $G_\C$ or they are not. If $v_{m-1}$ and $v_m$ are \textit{not} two vertices in the same cycle, then place $v_m$ at $(x_1,x_2,\dots,x_k,1,0,\dots,0)$ and draw the edge connecting $v_{m-1}$ and $v_m$ as the line segment with end points at their respective positions. We now have $\hat{G}$ is a quasi-square grid with $V(\hat{G}) = \{v_0,v_1,\dots,v_m\}$, $\hat{G}=G_\C\lvert_{V(\hat{G})}$, and that if $\hat{G}$ contains at least two vertices that are in some cycle of $G_\C$, then $\hat{G}$ contains all of the vertices in that cycle.
	
On the other hand, if $v_{m-1}$ and $v_{m}$ are in the same cycle in $G_\C$, let $v_{m-1}, v_m, \dots, v_{m+\ell}$
 be the vertices in the cycle labeled such that $v_i$ and $v_j$ are adjacent to one another if 
 $i=j-1$ or both $i=m-1$
 and $j=m+\ell$. Since $G_\C$ contains no $3$-cycles, we must have $\ell \geq 2$. Place the vertices in the following manner: for $i\in \{0,1,\dots,\ell-2\}$ we place $v_{m+i}$ at $(x_0, x_1, \dots, x_k+1+i, 0, \dots 0)$, $v_{m+\ell-1}$ at
  $(x_0, x_1, \dots, x_k+\ell-1,1,0,\dots,0)$, and $v_{m+\ell}$ at $(x_0,x_1,\dots,x_k,1,0,\dots,0)$. We then add in the edges between the vertices that are adjacent in $G_\C$ by drawing the line segment with endpoints at the respective vertices' locations. We now have $\hat{G}$ is a
   quasi-square grid with $V(\hat{G}) = \{v_0,v_1,\dots,v_{m+\ell}\}$ $\hat{G}=G_\C\lvert_{V(\hat{G})}$, and that if $\hat{G}$ contains at least two vertices that are in some cycle of $G_\C$, then $\hat{G}$ contains all of the vertices in that cycle. 
	
Since $G_\C$ is a finite graph, we must get $G_\C=\hat{G}$ after finitely many steps. Furthermore, when constructing $\hat{G}$, we used an additional coordinate whenever we added an entire cycle or a vertex that is not contained in any cycle. Thus, we required no more than $D-2$ coordinates, and our graph is contained in $\mathbb{R}^{D-2}$ as desired. Therefore, we may apply Theorem \ref{main} to attain a convex realization in $\mathbb{R}^D$. When $G_\C$ is not a connected graph, we repeat the above process for each connected component of $G_\C$, translating the graph resulting from each connected component so that the components do not overlap.
\end{proof}

\subsection*{Acknowledgements}
This work began as a project for Seminar in Mathematical Biology which was jointly taught by Anne Shiu and Jay Walton. RW was partially supported by an NSF grant (DMS-1501370) awarded to Frank Sottile. The author thanks Anne Shiu and Frank Sottile for helpful discussions.

\bibliographystyle{amsplain}
\bibliography{mybib}
\end{document}